\newtheorem{theorem}{Theorem}
\newcommand{\qsymbol}[6]{\left|\begin{array}{lll}
    #1 & #2 & #3 \\
    #4 & #5 & #6 
    \end{array}\right|}
\newcommand{\qshsymbol}[6]{\left|\begin{array}{lll}
    #1 & #2 & #3 \\
    #4 & #5 & #6 
    \end{array}\right|'}
\newcommand{\bsymbol}[6]{\left(\begin{array}{lll}
    #1 & #2 & #3 \\
    #4 & #5 & #6 
    \end{array}\right)}
\title{Homologically trivial part of the Turaev -- Viro invariant order 7}
\author{Korablev Ph. G.}
\address{Chelyabinsk State University, Chelyabinsk, Russia; N.N. Krasovsky Institute of Mathematics and Meckhanics, Ekaterinburg, Russia}
\email{korablev@csu.ru}
\date{}
\begin{document}

\begin{abstract}
    A homologically trivial part of any Turaev -- Viro invariant odd order $r$ is a Turaev -- Viro type invariant order $\frac{r + 1}{2}$. In this paper we find an explicit formulas for this Turaev -- Viro type invariant, corresponding to the invariant order $r = 7$. Our formulas express $6j$-symbols and colour weights in terms of $\gamma$, where $\gamma$ is a root of the polynomial $\mathcal{T}(x) = x^3 - 2x^2 - x + 1$.
\end{abstract}

\maketitle

\section{Introduction}

Turaev -- Viro invariants for 3-manifolds were proposed by V. Turaev and O. Viro in 1992 in the paper \cite{TuraevViro}. They constructed an infinite family of invariants, each of which is determined by the choice of its order $r\geqslant 2$. Let $TV_r$ denote the Turaev -- Viro invariant of order $r$.

For each $r\geqslant 2$ define the set $C_r = \{0, 1, \ldots, r - 2\}$. It is convenient to think of this set $C_r$ as the set of colours used to colour 3-manifold spine cells when computing the values of the invariant $TV_r$. For the purposes of this article, the topological meaning of the elements of this set is not important. One way to define the invariant $TV_r$ is to fix the complex values $w_i$ for all $i\in C_r$, called colour weights, and so-called $6j$-symbols, denoted by $\qsymbol{i}{j}{k}{l}{m}{n}$, for all $i, j, k, l, m, n\in C_r$.

The values of all $6j$-symbols and the colour weights that define the $TV_r$ invariant are not arbitrary. They are described by formulas which are given, for example, in the books \cite[section 8.1.4]{Matveev} and \cite[section XII.8.5]{Turaev}, article \cite[section 7]{TuraevViro} and below in the second paragraph. Quantum integers and quantum factorials play a central role in these formulas. Let $q$ be a root of unity of degree $2r$ such that $q^2$ is a primitive root of unity of degree $r$. Let us denote $$[n]_r = \dfrac{q^n - q^{-n}}{q - q^{-1}}.$$ The value $[n]_r$ is called a quantum integer. This value is always real, but it depends on the choice of the root $q$. Obviously $[1]_r = 1$ and $[r]_r = 0$. The quantum factorial is the value $$[n]_r! = [n]_r\cdot [n - 1]_r\cdot \ldots\cdot [1]_r.$$

There are invariants of 3-manifolds given by other values of $6j$-symbols and colour weights. According to the terminology proposed in the book \cite[remark 8.1.20]{Matveev}, such invariants are called Turaev -- Viro type invariants. To construct a Turaev -- Viro type invariant of order $r$, it suffices to choose values of the colour weights $w_i$ and $6j$-symbols such that they satisfy the equations $$\qsymbol{i}{j}{k}{l}{ m}{n}\cdot \qsymbol{i}{j}{k}{l'}{m'}{n'} = \sum\limits_{z\in C_r}w_z\cdot \qsymbol{i}{ m}{n}{z}{n'}{m'}\cdot \qsymbol{j}{l}{n}{z}{n'}{m'}\cdot \qsymbol{k}{l} {m}{z}{m'}{l'}$$ for all $i, j, k, l, m, n, l', m', n' \in C_r$.

An example of a non-trivial Turaev -- Viro type invariant of order 3 is the $\varepsilon$-invariant (see \cite{TInvariant}). It is given by the following set of colour weights and $6j$-symbols (see \cite[section 8.1.2]{Matveev}): $w_0 = 1$, $w_1 = \varepsilon$ and 

\begin{center}
{\setlength{\tabcolsep}{0.25cm}
\begin{tabular}{lll}
$\qsymbol{0}{0}{0}{0}{0}{0} = 1$, & $\qsymbol{0}{0}{0}{1}{1}{1} = \dfrac{1}{\sqrt{\varepsilon}}$, & $\qsymbol{0}{1}{1}{0}{1}{1} = \dfrac{1}{\varepsilon}$, \\[0.35cm]
$\qsymbol{0}{1}{1}{1}{1}{1} = \dfrac{1}{\varepsilon}$, & $\qsymbol{1}{1}{1}{1}{1}{1} = -\dfrac{1}{\varepsilon^2}$,
\end{tabular}
}
\end{center}
where $\varepsilon$ is a root of the equation $x^2 - x - 1 = 0$. All other $6j$-symbols are zero.

It is well known that each Turaev -- Viro invariant $TV_r$ of order $r$ is the sum of three invariants $TV_{r, 0}$, $TV_{r, 1}$ and $TV_{r, 2}$ (see \cite{ThreeParts}). The invariant $TV_{r, 0}$ is called the homologically trivial part of the Turaev -- Viro invariant. For odd $r$ it is a Turaev -- Viro type invariant of order $\frac{r + 1}{2}$. Let's denote it $TH_{\frac{r + 1}{2}}$.

Let us denote the weights of the colours defining the invariant $TH_{\frac{r + 1}{2}}$ by $w_i'$, $i\in C_{\frac{r + 1}{2}} = \{0, 1, \ldots, \frac{r - 3}{2}\}$, and $6j$-symbols by $\qshsymbol{i}{j}{k}{l}{m}{n}$, $i, j, k, l, m, n\in C_{\frac{r + 1}{2}}$. There is a simple way to find the weights of the colours and $6j$-symbols that define the $TH_{\frac{r + 1}{2}}$ invariant, knowing the weights of the colours and $6j$-symbols that define the $TV_r$ invariant:
\begin{center}
    $w_i' = w_{2i}$ for all $i\in C_{\frac{r + 1}{2}}$,
    
    $\qshsymbol{i}{j}{k}{l}{m}{n} = \qsymbol{2i}{2j}{2k}{2l}{2m}{2n}$ for all $i, j, k, l, m, n\in C_{\frac{r + 1}{2}}$.
\end{center}

It can be shown that the $\varepsilon$-invariant coincides with the $TH_{3}$ invariant, i.e. it is the homologically trivial part of the Turaev -- Viro invariant of order 5 (see \cite[theorem 8.1.26]{Matveev} and \cite{TInvariant}).

The purpose of this article is to find formulas that define the $TH_{4}$ invariant, i.e. the homologically trivial part of the $TV_7$ invariant, similar to formulas that define the $\varepsilon$-invariant. The main result of the article is formulated in the theorem \ref{Theorem:GammaInvariant}. In this theorem the values of the colour weights and $6j$-symbols that define the $TH_{4}$ invariant are expressed by the root $\gamma$ of the equation $x^3 - 2x^2 - x + 1 = 0$. This description is completely similar to the description of the $\varepsilon$-invariant by the root $\varepsilon$ of the equation $x^2 - x - 1 = 0$. Therefore, the $TH_{4}$ invariant is called the $\gamma$-invariant.

The structure of the article is as follows. In the second section we write explicit formulas for colour weights and $6j$-symbols, which define the invariant $TV_{7, 0}$. These formulas are written in terms of quantum integers $[n]_7$. The third section is devoted to the simplest properties of the polynomial $\mathcal{T}(x) = x^3 - 2x^2 - x + 1$, which plays a central role in the construction of the $\gamma$-invariant. In the fourth section, the main theorem \ref{Theorem:GammaInvariant} is proved. In the fifth section we compute polynomials corresponding to the homologically trivial parts of the Turaev -- Viro invariants of odd orders $5\leqslant r\leqslant 21$.

\section{Turaev -- Viro invariant order 7}

As before, denote $C_r = \{0, \ldots, r - 2\}$. The books \cite[section 8.1.4]{Matveev}, \cite[section XII.8.5]{Turaev} and the article \cite[section 7]{TuraevViro} contain explicit formulas for calculating the values of the weights $w_i$ and $6j$-symbols $\qsymbol{i}{j}{k}{l}{m}{n}$, $i, j, k, l, m, n \in C_r$, which define the Turaev-Viro invariant of order $ r\geqslant 2$. Let's present these formulas.

\begin{center}
    $w_i = (-1)^i [i + 1]_r$ for all $i\in C_r$.
\end{center}

Let us say that the triplet $(x, y, z)\in C_r^3$ is admissible if and only if
\begin{enumerate}
\item $x + y\geqslant z$, $y + z\geqslant x$ and $z + x\geqslant y$;
\item $x + y + z$ is even;
\item $x + y + z \leqslant 2\cdot r - 4$.
\end{enumerate}

The value of the $6j$-symbol $\qsymbol{i}{j}{k}{l}{m}{n}$ is 0 if at least one of the triples $(i, j, k)$, $(k , l, m)$, $(m, n, i)$, $(j, l, n)$ is not admissible. The values of the remaining $6j$-symbols are calculated as follows
$$\qsymbol{i}{j}{k}{l}{m}{n} = \sum\limits_{z = \alpha}^{\beta}\frac{(-1)^z\cdot [z + 1]_r!\cdot A(i, j, k, l, m, n)}{B\left(z, \bsymbol{i}{j}{k}{l}{m}{n}\right)\cdot C\left(z, \bsymbol{i}{j}{k}{l}{m}{n}\right)},$$
where 
\begin{center}
$A(i, j, k, l, m, n) = \emph{\textbf{i}}^{i+j+k+l+m+n}\cdot \Delta(i, j, k)\cdot \Delta(i, m, n)\cdot \Delta(j, l, n)\cdot \Delta(k, l, m),$
\end{center}
\begin{center}
$\emph{\textbf{i}}\in\mathbb{C}$ --- imaginary unit,
\end{center}
$$B\left(z, \bsymbol{i}{j}{k}{l}{m}{n}\right) = [z - \widehat{i} - \widehat{j} - \widehat{k}]_r!\cdot [z - \widehat{i} - \widehat{m} - \widehat{n}]_r! \cdot [z - \widehat{j} - \widehat{l} - \widehat{n}]_r!\cdot [z - \widehat{k} - \widehat{l} - \widehat{m}]_r!,$$
$$C\left(z, \bsymbol{i}{j}{k}{l}{m}{n}\right) = [\widehat{i} + \widehat{j} + \widehat{l} + \widehat{m} - z]_r!\cdot [\widehat{i} + \widehat{k} + \widehat{l} + \widehat{n} - z]_r!\cdot [\widehat{j} + \widehat{k} + \widehat{m} + \widehat{n}] - z]_r!,$$
$$\Delta(i, j, k) = \sqrt{\frac{[\widehat{i} + \widehat{j} - \widehat{k}]_r!\cdot [\widehat{j} + \widehat{k} - \widehat{i}]_r!\cdot [\widehat{k} + \widehat{j} - \widehat{i}]_r!}{[\widehat{i} + \widehat{j} + \widehat{k} + 1]_r!}},$$

\begin{center}
$\widehat{x} = \frac{x}{2}$ for all $x\in C_r$,

$\alpha = \max(\widehat{i} + \widehat{j} + \widehat{k}, \widehat{i} + \widehat{m} + \widehat{n}, \widehat{j} + \widehat{l} + \widehat{n}, \widehat{k} + \widehat{l} + \widehat{m})$,

$\beta = \min(\widehat{i} + \widehat{j} + \widehat{l} + \widehat{m}, \widehat{i} + \widehat{k} + \widehat{l} + \widehat{n}, \widehat{j} + \widehat{k} + \widehat{m} + \widehat{n})$.
\end{center}

Since in this article we are mainly talking about the Turaev -- Viro invariant of order 7, we will write $[n]$ instead of $[n]_7$ for simplicity.

\begin{theorem}
    \label{Theorem:Symbols}
    The homologically trivial part $TV_{7, 0}$ of the Turaev -- Viro invariant of order 7 is defined by the following values of the weights $w_i$, $i\in\{0, 2, 4\}$ and $6j$-symbols $\qsymbol{i}{j}{k}{l}{m}{n}$, $i, j, k, l, m, n\in\{0, 2, 4\}$: $w_0 = 1$, $w_2 = [3]$, $w_4 = [5]$ and
    
    {\setlength{\tabcolsep}{0.75cm}
    \begin{longtable}{ll}
    $\qsymbol{0}{0}{0}{0}{0}{0} = 1$ & $\qsymbol{0}{0}{0}{2}{2}{2} = -\dfrac{1}{\sqrt{[3]}}$ \\[0.35cm]
    $\qsymbol{0}{0}{0}{4}{4}{4} = \dfrac{1}{\sqrt{[5]}}$ &
    $\qsymbol{0}{2}{2}{0}{2}{2} = \dfrac{1}{[3]}$ \\[0.35cm]
    $\qsymbol{0}{2}{2}{2}{2}{2} = \dfrac{1}{[3]}$ & $\qsymbol{0}{2}{2}{2}{4}{4} = -\dfrac{1}{\sqrt{[3]\cdot [5]}}$ \\[0.35cm]
    $\qsymbol{0}{2}{2}{4}{2}{2} = \dfrac{1}{[3]}$ & $\qsymbol{0}{2}{2}{4}{4}{4} = -\dfrac{1}{\sqrt{[3]\cdot [5]}}$ \\[0.35cm]
    $\qsymbol{0}{4}{4}{0}{4}{4} = \dfrac{1}{[5]}$ &
    $\qsymbol{0}{4}{4}{2}{4}{4} = \dfrac{1}{[5]}$ \\[0.35cm]
    $\qsymbol{2}{2}{2}{2}{2}{2} = \dfrac{[5] - 1}{[2]\cdot [3]\cdot [4]}$ & $\qsymbol{2}{2}{2}{2}{2}{4} = -\dfrac{[2]}{[3]\cdot [4]}$ \\[0.35cm]
    $\qsymbol{2}{2}{2}{2}{4}{4} = -\dfrac{1}{[4]}\cdot\sqrt{\dfrac{[2]\cdot [6]}{[3]\cdot [5]}}$ & $\qsymbol{2}{2}{2}{4}{4}{4} = \dfrac{[3]}{[4]\cdot\sqrt{[2]\cdot [3]\cdot [5]\cdot [6]}}$ \\[0.35cm]
    $\qsymbol{2}{2}{4}{2}{2}{4} = \dfrac{[2]}{[3]\cdot [4]\cdot [5]}$ &
    $\qsymbol{2}{2}{4}{2}{4}{4} = \dfrac{[2]}{[4]\cdot [5]}$ \\[0.35cm]
    $\qsymbol{2}{4}{4}{2}{4}{4} = -\dfrac{1}{[4]\cdot [5]\cdot [6]}$
    \end{longtable}
    }
    
    All other $6j$-symbols and weights $w_i$, $i\in\{1, 3, 5\}$ are zero.
\end{theorem}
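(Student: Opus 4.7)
The plan is to apply the general formulas for $TV_r$ recorded at the beginning of the section, specialized to $r = 7$ and to colour indices lying in $\{0, 2, 4\} \subset C_7$, together with the passage from $TV_r$ to its homologically trivial part $TV_{r,0}$ described in the introduction. The latter simply samples the even-indexed colours, so two tasks remain: to show that the weights and $6j$-symbols of $TV_{7,0}$ vanish whenever an odd colour appears, and to evaluate the surviving entries.

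The weights are read off immediately: $w_i = (-1)^i [i+1]$ at $i = 0, 2, 4$ gives $w_0 = 1$, $w_2 = [3]$, $w_4 = [5]$, while the identifications $w_i' = w_{2i}$ and $\qshsymbol{i}{j}{k}{l}{m}{n} = \qsymbol{2i}{2j}{2k}{2l}{2m}{2n}$ from the introduction force the odd-coloured weights and the mixed-parity $6j$-symbols to be zero in $TV_{7,0}$. For the even-coloured $6j$-symbols I would first enumerate the admissible triples $(i,j,k)$ with $i,j,k \in \{0,2,4\}$; the parity condition is automatic, so only the triangle inequalities and the bound $i+j+k \leqslant 10$ remain. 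The 24-fold tetrahedral symmetry of $\qsymbol{i}{j}{k}{l}{m}{n}$ then collapses the list of pairwise-distinct non-zero symbols to the seventeen displayed in the statement.

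For each listed symbol I would determine the integers $\alpha$ and $\beta$ bounding the summation variable $z$, write out the (at most three) terms $z = \alpha, \alpha+1, \ldots, \beta$, and expand using the closed form of $\Delta(\cdot,\cdot,\cdot)$. Since every $\widehat{x} = x/2$ is an integer, all quantum factorials in the formula become honest products of the quantum integers $[2], [3], [4], [5], [6]$; the imaginary-unit factor in $A(\cdot)$ evaluates to $\pm 1$ because $i+j+k+l+m+n$ is even for any admissible tuple of even colours. Entries containing a zero index reduce to a single term and follow after a direct cancellation. The remaining entries, in particular $\qsymbol{2}{2}{2}{2}{2}{2}$ and $\qsymbol{2}{4}{4}{2}{4}{4}$, involve two or three summands and require genuine algebraic manipulation.

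The main obstacle is not conceptual but computational: collapsing each multi-term sum into the compact expression claimed in the table (for instance $([5]-1)/([2][3][4])$ or $[3]/([4]\sqrt{[2][3][5][6]})$) requires repeated use of the relations valid at the seventh root of unity, above all $[7] = 0$ and the resulting symmetry $[n] = [7-n]$ for $1 \leqslant n \leqslant 6$. Ensuring that each simplification matches exactly the closed form stated, rather than some algebraically equivalent variant, is the step that will demand the most care.
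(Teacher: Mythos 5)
Your plan is exactly the paper's approach: the paper's own proof consists of the single remark that the theorem follows by ``careful calculation using the formulas given above,'' plus the observation that weights and $6j$-symbols involving an odd colour vanish, and your outline fills in precisely the computational steps that remark leaves implicit. The proposal is correct and, if anything, more explicit than the published argument.
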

\begin{proof}
    The statement of the theorem is obtained by a careful calculation using the formulas given above. Since we are only interested in the homologically trivial part of the invariant, all weights $w_i$ for odd $i\in C_r$ are zero. The values of all $6j$-symbols $\qsymbol{i}{j}{k}{l}{m}{n}$ are zero if at least one of the numbers $i, j, k, l, m, n\in C_7$ is odd.
\end{proof}

\section{Polynomial $\mathcal{T}(x) = x^3 - 2x^2 - x + 1$}

A simple analysis shows that the polynomial $\mathcal{T}(x)$ has three real roots: one is greater than 2, let's denote it $\gamma_1$, the second belongs to the interval $(0, 1)$, let's denote it $\gamma_2$, and the third root is negative, let's denote it $\gamma_3$ (see figure \ref{Figure:TRoots}).

\begin{figure}[h!]
    \begin{tikzpicture}[scale=1.5]
    \draw[->, thick] (-1.5, 0.0) -- (3.5, 0.0);
    \filldraw (-1.0, 0.0) node[below] {$-1$} circle (0.02);
    \filldraw (0.0, 0.0) node[below] {$0$} circle (0.02);
    \filldraw (1.0, 0.0) node[below] {$1$} circle (0.02);
    \filldraw (2.0, 0.0) node[below] {$2$} circle (0.02);
    \filldraw (3.0, 0.0) node[below] {$3$} circle (0.02);
    
    \filldraw (-0.802, 0.0) node[above] {$\gamma_3$} circle (0.03);
    \filldraw (0.555, 0.0) node[above] {$\gamma_2$} circle (0.03);
    \filldraw (2.247, 0.0) node[above] {$\gamma_1$} circle (0.03);
    \end{tikzpicture}
    \caption{\label{Figure:TRoots}Roots of the polynomial $\mathcal{T}(x) = x^3 - 2x^2 - x + 1$.}
\end{figure}
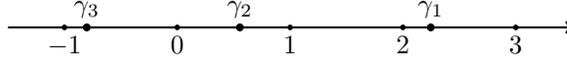

\begin{theorem}
    Let $\gamma$ be a root of the polynomial $\mathcal{T}(x)$. Then $1 - \frac{1}{\gamma}$ is also a root of the polynomial $\mathcal{T}(x)$.
\end{theorem}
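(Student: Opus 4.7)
The plan is a direct substitution: set $\delta = 1 - 1/\gamma = (\gamma-1)/\gamma$ and verify that $\mathcal{T}(\delta) = 0$ as a consequence of $\mathcal{T}(\gamma) = 0$. Since the statement is purely algebraic and the polynomial is only cubic, no clever structural argument is needed; the key is simply to keep the bookkeeping clean.

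Concretely, I first write
\[
\mathcal{T}(\delta)=\frac{(\gamma-1)^3}{\gamma^3}-2\,\frac{(\gamma-1)^2}{\gamma^2}-\frac{\gamma-1}{\gamma}+1,
\]
clear denominators by multiplying through by $\gamma^3$, and expand each of $(\gamma-1)^3$, $2\gamma(\gamma-1)^2$ and $\gamma^2(\gamma-1)$. After collecting coefficients of $\gamma^3,\gamma^2,\gamma,1$ one obtains
\[
\gamma^{3}\mathcal{T}(\delta)=-\gamma^{3}+2\gamma^{2}+\gamma-1=-\mathcal{T}(\gamma).
\]
Since $\mathcal{T}(\gamma)=0$ by hypothesis and $\gamma\neq 0$ (indeed $\mathcal{T}(0)=1$), we conclude $\mathcal{T}(\delta)=0$.

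There is essentially no obstacle to this computation; the only place to be careful is the sign-and-coefficient tally when collecting like powers of $\gamma$, where the coefficient of $\gamma^3$ must come out to $1-2-1+1=-1$, the coefficient of $\gamma^2$ to $-3+4+1=2$, the coefficient of $\gamma$ to $3-2=1$, and the constant to $-1$. These four coefficients are exactly $-\mathcal{T}(x)$'s coefficients, which is what makes the proof close.

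As a structural remark worth including, the substitution $\phi(x)=1-1/x$ is a M\"obius transformation of order $3$ in $\mathrm{PGL}_2(\mathbb{Q})$ (one checks that $\phi^{2}(x)=-1/(x-1)$ and $\phi^{3}(x)=x$). Hence the theorem in fact asserts that $\phi$ cyclically permutes the three roots $\gamma_{1},\gamma_{2},\gamma_{3}$ of $\mathcal{T}$, and the identity $\gamma^{3}\mathcal{T}(\phi(\gamma))=-\mathcal{T}(\gamma)$ established above is the algebraic incarnation of that fact.
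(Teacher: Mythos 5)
Your proposal is correct and is essentially the paper's own argument: both proofs verify the identity by direct substitution, the only cosmetic difference being that you multiply $\mathcal{T}(1-1/\gamma)$ by $\gamma^{3}$ to obtain $-\mathcal{T}(\gamma)$, whereas the paper divides $\mathcal{T}(\gamma)=0$ by $\gamma^{3}$ and recognizes $\mathcal{T}(1-1/\gamma)$ as the negative of the result. Your coefficient tally and the observation that $\gamma\neq 0$ both check out, so nothing further is needed.
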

\begin{proof}
    First, note that $\gamma\neq 0$. Furthermore, since $$\gamma^3 - 2\gamma^2 - \gamma + 1 = 0,$$ dividing both sides of this equality by $\gamma^3$, we get $$1 - \frac{2}{\gamma} - \frac{1}{\gamma^2} + \frac{1}{\gamma^3} = 0.$$
    
    Calculate $\mathcal{T}(1 - \frac{1}{\gamma})$.
    
    \begin{multline*}
    \mathcal{T}\left(1 - \frac{1}{\gamma}\right) = \left(1 - \frac{1}{\gamma}\right)^3 - 2\left(1 - \frac{1}{\gamma}\right)^2 - \left(1 - \frac{1}{\gamma}\right) + 1 = \\ = -1 + \frac{2}{\gamma} + \frac{1}{\gamma^2} - \frac{1}{\gamma^3} = -\left(1 - \frac{2}{\gamma} - \frac{1}{\gamma^2} + \frac{1}{\gamma^3}\right) = 0.
    \end{multline*}
\end{proof}

Consider the function $\tau(x)\colon\mathbb{R}\to\mathbb{R}$, which acts as follows: $$\tau(x) = 1 - \frac{1}{x}.$$ It's easy to see that $\gamma_2 = \tau(\gamma_1)$, $\gamma_3 = \tau(\gamma_2)$ and $\gamma_1 = \tau(\gamma_3)$. The last equality follows from the fact that the function $\tau$ has order 3, i.e. $\tau\circ\tau\circ\tau = id$.

\section{Turaev -- Viro type invariant $TH_4$}

\begin{theorem}
    \label{Theorem:Three}
    $\mathcal{T}([3]) = 0$.
\end{theorem}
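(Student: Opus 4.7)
The plan is to translate the equation $\mathcal{T}([3])=0$ into a statement about a primitive $7$-th root of unity, and then use the cyclotomic relation for that root.

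First I would simplify the quantum integer. By definition $[3] = [3]_7 = (q^3 - q^{-3})/(q - q^{-1})$, and the standard telescoping gives $[3] = q^2 + 1 + q^{-2}$. Writing $\zeta = q^2$, the hypothesis on $q$ guarantees that $\zeta$ is a primitive $7$-th root of unity, so if I set $y = \zeta + \zeta^{-1}$ then $[3] = 1 + y$. The problem thus reduces to proving $\mathcal{T}(1+y) = 0$, i.e.\ $(1+y)^3 - 2(1+y)^2 - (1+y) + 1 = 0$. Direct expansion collapses this to the cubic
\[
y^3 + y^2 - 2y - 1 = 0,
\]
which is exactly the minimal polynomial of $2\cos(2\pi k/7)$ over $\mathbb{Q}$.

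Next I would verify that identity. Starting from $1+\zeta+\zeta^2+\cdots+\zeta^6=0$ and dividing by $\zeta^3$, one obtains
\[
(\zeta^3+\zeta^{-3})+(\zeta^2+\zeta^{-2})+(\zeta+\zeta^{-1})+1 = 0.
\]
Using the Chebyshev-type relations $\zeta+\zeta^{-1}=y$, $\zeta^2+\zeta^{-2}=y^2-2$, and $\zeta^3+\zeta^{-3}=y^3-3y$, this immediately becomes $y^3+y^2-2y-1=0$, as needed.

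Substituting $y=[3]-1$ back and collecting terms (the $-3[3]^2$ from the cube cancels with $+[3]^2$ from the square to give $-2[3]^2$; the $+3[3]$ from the cube combines with $-2[3]$ from the square and $-2[3]$ from the linear term to give $-[3]$; constants $-1+1+2-1=1$) yields $[3]^3-2[3]^2-[3]+1=0$, which is $\mathcal{T}([3])=0$. The only non-routine ingredient is the minimal polynomial of $\zeta+\zeta^{-1}$, and the derivation above from the cyclotomic sum makes that step entirely self-contained, so no real obstacle is anticipated.
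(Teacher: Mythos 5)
Your proof is correct and follows essentially the same route as the paper: both arguments reduce $\mathcal{T}([3])=0$ to the vanishing of $1+\omega+\omega^2+\cdots+\omega^6$ for the primitive $7$-th root of unity $\omega=q^2$. The only difference is organizational --- you pass through the substitution $y=[3]-1=\zeta+\zeta^{-1}$ and the minimal polynomial $y^3+y^2-2y-1$ of $2\cos(2\pi k/7)$, whereas the paper expands $\mathcal{T}(q^2+1+q^{-2})$ directly in powers of $q$ and recognizes the resulting sum as the cyclotomic relation.
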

\begin{proof}
    It follows from the definition that $[3] = q^2 + 1 + q^{-2}$, where $q\neq \pm 1$ is a root of unity of degree 14. Let's calculate $\mathcal{T}(q^2 + 1 + q^{-2})$ explicitly:

    \begin{multline*}
    \mathcal{T}(q^2 + 1 + q^{-2}) = (q^2 + 1 + q^{-2})^3 - 2(q^2 + 1 + q^{-2})^2 - (q^2 + 1 + q^{-2}) + 1 = \\ = q^6 + q^4 + q^2 + 1 + q^{-2} + q^{- 4} + q^{-6}.
    \end{multline*}

    Note that if $\omega = q^2$, then $\omega$ is a primitive root of unity of degree 7. Then $$\mathcal{T}([3]) = \omega^6 + \omega^5 + \omega^4 + \omega^3 + \omega^2 + \omega + 1 = 0.$$
\end{proof}

When calculating the value $[3]$, we can take $q$ to be any root of unity of degree 14 (except $\pm 1$). Figure \ref{Figure:Roots} shows different roots of the polynomial $\mathcal{T}(x)$ corresponding to different choices of $q$. As before, $\gamma_1$ is the largest root of the polynomial, $\gamma_2$ is the middle, and $\gamma_3$ is the smallest.

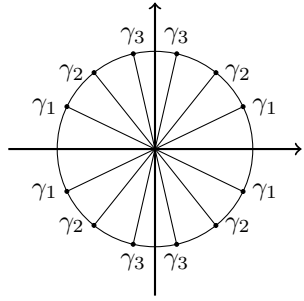
\begin{figure}[h!]
    \begin{center}
    \begin{tikzpicture}[scale=1.3]
    \draw[->, thick] (-1.5, 0.0) -- (1.5, 0.0);
    \draw[->, thick] (0.0, -1.5) -- (0.0, 1.5);
    \draw (0, 0) circle (1.0);
    
    \draw (0.0, 0.0) -- (25.71: 1.0);
    \filldraw (25.71: 1.0) node[right] {$\gamma_1$} circle (0.02);
    
    \draw (0.0, 0.0) -- (25.71 * 2: 1.0);
    \filldraw (25.71 * 2: 1.0) node[right] {$\gamma_2$} circle (0.02);
    
    \draw (0.0, 0.0) -- (25.71 * 3: 1.0);
    \filldraw (25.71 * 3: 1.0) node[above] {$\gamma_3$} circle (0.02);
    
    \draw (0.0, 0.0) -- (25.71 * 4: 1.0);
    \filldraw (25.71 * 4: 1.0) node[above] {$\gamma_3$} circle (0.02);
    
    \draw (0.0, 0.0) -- (25.71 * 5: 1.0);
    \filldraw (25.71 * 5: 1.0) node[left] {$\gamma_2$} circle (0.02);
    
    \draw (0.0, 0.0) -- (25.71 * 6: 1.0);
    \filldraw (25.71 * 6: 1.0) node[left] {$\gamma_1$} circle (0.02);
    
    \draw (0.0, 0.0) -- (25.71 * 8: 1.0);
    \filldraw (25.71 * 8: 1.0) node[left] {$\gamma_1$} circle (0.02);
    
    \draw (0.0, 0.0) -- (25.71 * 9: 1.0);
    \filldraw (25.71 * 9: 1.0) node[left] {$\gamma_2$} circle (0.02);
    
    \draw (0.0, 0.0) -- (25.71 * 10: 1.0);
    \filldraw (25.71 * 10: 1.0) node[below] {$\gamma_3$} circle (0.02);
    
    \draw (0.0, 0.0) -- (25.71 * 11: 1.0);
    \filldraw (25.71 * 11: 1.0) node[below] {$\gamma_3$} circle (0.02);
    
    \draw (0.0, 0.0) -- (25.71 * 12: 1.0);
    \filldraw (25.71 * 12: 1.0) node[right] {$\gamma_2$} circle (0.02);
    
    \draw (0.0, 0.0) -- (25.71 * 13: 1.0);
    \filldraw (25.71 * 13: 1.0) node[right] {$\gamma_1$} circle (0.02);
    \end{tikzpicture}
    \end{center}
    \caption{\label{Figure:Roots}Different values of the 14-th root of the unit and the corresponding root of the polynomial $\mathcal{T}(x)$, which is equal to $[3]$.}
\end{figure}

\begin{theorem}
    \label{Theorem:Identities}
    The following identities hold:
    \begin{enumerate}
        \item $[5]\cdot [3] = [3] + [5]$;
        \item $[2]\cdot [4] = [3]\cdot [5]$;
        \item $[2]\cdot [3] = [4]\cdot [5]$;
        \item $[2]\cdot [6] = [5]$;
        \item $[4]\cdot [6] = [3]$;
        \item $[3]^2 = [4]^2$.
    \end{enumerate}
\end{theorem}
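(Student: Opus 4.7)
The plan is to reduce all six identities to a single symmetry of the quantum integers $[n]$, namely $[7-n]=[n]$, which comes from the fact that $q^2$ is a primitive 7th root of unity together with $q\neq\pm 1$. First I would observe that $q^{14}=1$ and that $q^7$ has order exactly $2$ (it cannot be $1$, since then $q^2$ would have order dividing $7/2$); hence $q^7=-1$. Multiplying numerator and denominator of the defining quotient for $[7-n]$ by $q^7$ then yields
$$[7-n] = \frac{q^{7-n}-q^{n-7}}{q-q^{-1}} = \frac{-q^{-n}+q^{n}}{q-q^{-1}} = [n].$$
In particular $[7]=0$, $[6]=[1]=1$, $[5]=[2]$, and $[4]=[3]$.

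After these substitutions identities (2)--(6) become tautologies: (2) and (3) both reduce to $[2][3]=[3][2]$, (4) reduces to $[2]\cdot 1=[2]$, (5) reduces to $[3]\cdot 1=[3]$, and (6) reduces to $[3]^{2}=[3]^{2}$. So only identity (1) requires a genuine computation.

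For identity (1) I would use the standard product-to-sum formula
$$[m][n]\;=\;\sum_{k=0}^{\min(m,n)-1}[m+n-1-2k],$$
which is immediate by expanding $(q^{m}-q^{-m})(q^{n}-q^{-n})$ and dividing by $(q-q^{-1})^{2}$. Applied with $m=3$, $n=5$ this gives $[3][5]=[7]+[5]+[3]=[5]+[3]$, since $[7]=0$, which is exactly the claim.

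No step looks like a real obstacle: the whole proof is a short computation once the symmetry $[7-n]=[n]$ has been extracted. If I wanted to avoid quoting the product-to-sum identity, I could instead simply expand $[3]=q^{2}+1+q^{-2}$ and $[5]=q^{4}+q^{2}+1+q^{-2}+q^{-4}$, multiply the two polynomials in $q$, and regroup the fifteen resulting monomials (with multiplicities) as $[7]+[5]+[3]$; this is entirely mechanical and uses only $q^{7}=-1$.
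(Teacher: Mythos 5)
Your route is genuinely different from the paper's: the paper simply expands $[2],\dots,[6]$ as Laurent polynomials in $q$ and reduces everything using $1+\omega+\cdots+\omega^6=0$ for $\omega=q^2$, whereas you extract a reflection symmetry of the quantum integers plus the product-to-sum formula. That is cleaner and more explanatory, but one step fails as written: the claim that $q^7=-1$. The paper only requires $q^{14}=1$ with $q^2$ a primitive $7$th root of unity, and it explicitly allows all twelve roots $q\neq\pm1$ (see Figure \ref{Figure:Roots}); this condition is satisfied by $q$ of order $7$ as well as of order $14$, because if $q^7=1$ and $q\neq1$ then $q$ has order $7$ and $q^2$ is still a primitive $7$th root of unity since $\gcd(2,7)=1$. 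Your parenthetical justification (``$q^2$ would have order dividing $7/2$'') does not rule this case out. For $q$ of order $7$ one gets $[7-n]=-[n]$, hence $[5]=-[2]$, $[4]=-[3]$, $[6]=-1$, and your reductions of identities (2)--(6) are no longer literal tautologies.

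The gap is easily repaired: put $\varepsilon=q^7\in\{\pm1\}$, so that in all allowed cases $[7-n]=\varepsilon\,[n]$. Each of (2)--(6) then acquires the same power of $\varepsilon$ on both sides --- for instance (5) becomes $\varepsilon[3]\cdot\varepsilon[1]=[3]$ and (4) becomes $\varepsilon[2]=\varepsilon[2]$ --- so they still reduce to trivialities; and your argument for (1) uses only $[7]=0$, which holds for either sign of $q^7$. With that one correction your proof is complete and arguably more illuminating than the paper's brute-force expansion, since it isolates the two structural facts ($[7-n]=\pm[n]$ and the Clebsch--Gordan product formula) from which all six identities follow.
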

\begin{proof}
    Each relation is proved by substituting
    \begin{center}
        $[2] = q + q^{-1}$,
        
        $[3] = q^2 + 1 + q^{-2}$,
        
        $[4] = q^3 + q + q^{-1} + q^{-3}$,
        
        $[5] = q^4 + q^2 + 1 + q^{-2} + q^{-4}$,
        
        $[6] = q^5 + q^3 + q + q^{-1} + q^{-3} + q^{-5}$
    \end{center}
    into the required equations, opening brackets, and bringing similar. The only additional relation used is that if $\omega = q^2$, then $$1 + \omega + \omega^2 + \omega^3 + \omega^4 + \omega^5 + \omega^6 = 0.$$
\end{proof}

From the first statement of the theorem \ref{Theorem:Identities} it follows that if $[3] = \gamma$ is an arbitrary root of the polynomial $\mathcal{T}(x)$, then $$[5] = \dfrac{\gamma}{\gamma - 1}.$$

The homologically trivial part of any Turaev -- Viro invariant of order $r$ for odd $r$ is a Turaev -- Viro type invariant of order $\frac{r + 1}{2}$. To distinguish $6j$-symbols of this Turaev -- Viro type invariant from the original invariant of order $r$, we denote them by $\qshsymbol{i}{j}{k}{l}{m}{n}$ and weights $w_i'$.

\begin{theorem}
    \label{Theorem:GammaInvariant}
    Let $\gamma$ be an arbitrary root of the polynomial $\mathcal{T}(x)$. Then the invariant $TH_{4}$ of the Turaev -- Viro type of order 4, which coincides with the homologically trivial part $TV_{7, 0}$ of the Turaev -- Viro invariant of order 7, is given by the following values of weights $w_i'\in\{0, 1, 2\}$ and $6j$-symbols $\qshsymbol{i}{j}{k}{l}{m}{n}$, $i, j, k, l, m, n\in\{0, 1, 2\}$: $w_0' = 1$, $w_1' = \gamma$, $w_2' = \frac{\gamma}{\gamma - 1}$ and

    {\setlength{\tabcolsep}{1cm}
    \begin{longtable}{ll}
        $\qshsymbol{0}{0}{0}{0}{0}{0} = 1$ & $\qshsymbol{0}{0}{0}{1}{1}{1} = -\dfrac{1}{\sqrt{\gamma}}$ \\[0.35cm]
        $\qshsymbol{0}{0}{0}{2}{2}{2} = \dfrac{\sqrt{\gamma - 1}}{\sqrt{\gamma}}$ &
        $\qshsymbol{0}{1}{1}{0}{1}{1} = \dfrac{1}{\gamma}$ \\[0.35cm]
        $\qshsymbol{0}{1}{1}{1}{1}{1} = \dfrac{1}{\gamma}$ & $\qshsymbol{0}{1}{1}{1}{2}{2} = -\dfrac{\sqrt{\gamma - 1}}{\gamma}$ \\[0.35cm]
        $\qshsymbol{0}{1}{1}{2}{1}{1} = \dfrac{1}{\gamma}$ & $\qshsymbol{0}{1}{1}{2}{2}{2} = -\dfrac{\sqrt{\gamma - 1}}{\gamma}$ \\[0.35cm]
        $\qshsymbol{0}{2}{2}{0}{2}{2} = \dfrac{\gamma - 1}{\gamma}$ & $\qshsymbol{0}{2}{2}{1}{2}{2} = \dfrac{\gamma - 1}{\gamma}$ \\[0.35cm]
        $\qshsymbol{1}{1}{1}{1}{1}{1} = \dfrac{1}{\gamma^3}$ & $\qshsymbol{1}{1}{1}{1}{1}{2} = -\dfrac{1}{\gamma\cdot (\gamma - 1)}$ \\[0.35cm]
        $\qshsymbol{1}{1}{1}{1}{2}{2} = -\dfrac{1}{\gamma\cdot\sqrt{\gamma}}$ & $\qshsymbol{1}{1}{1}{2}{2}{2} = \dfrac{\gamma - 1}{\gamma\cdot\sqrt{\gamma}}$ \\[0.35cm]
        $\qshsymbol{1}{1}{2}{1}{1}{2} = \dfrac{1}{\gamma^2}$ &
        $\qshsymbol{1}{1}{2}{1}{2}{2} = \dfrac{1}{\gamma}$ \\[0.35cm]
        $\qshsymbol{1}{2}{2}{1}{2}{2} = -\dfrac{\gamma - 1}{\gamma^2}$
    \end{longtable}}

    All other $6j$-symbols are zero.
\end{theorem}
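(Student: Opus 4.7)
The plan is to derive the claimed formulas directly from Theorem~\ref{Theorem:Symbols} by combining two ingredients: the substitution rules $w_i' = w_{2i}$ and $\qshsymbol{i}{j}{k}{l}{m}{n} = \qsymbol{2i}{2j}{2k}{2l}{2m}{2n}$ recalled in the introduction, and the algebraic identities of Theorems~\ref{Theorem:Three} and~\ref{Theorem:Identities} that let us re-express every quantum integer appearing in Theorem~\ref{Theorem:Symbols} in terms of a single parameter $\gamma = [3]$. Since $TV_{7,0}$ is already known to be a Turaev--Viro type invariant of order $4$, nothing about invariance needs to be checked; the task is purely to verify equalities of complex numbers.

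First I would settle the weights. By Theorem~\ref{Theorem:Three}, $[3] = \gamma$, and identity (1) of Theorem~\ref{Theorem:Identities} gives $[5] = \gamma/(\gamma-1)$. The substitution rules then yield $w_0' = w_0 = 1$, $w_1' = w_2 = \gamma$ and $w_2' = w_4 = \gamma/(\gamma-1)$, matching the theorem. Next I would record the auxiliary relations which arise repeatedly in the $6j$-symbol computations: from identity~(2), $[2][4] = [3][5] = \gamma^2/(\gamma - 1)$; from identity~(6), $[4]^2 = \gamma^2$; from identity~(4), $[2][6] = [5] = \gamma/(\gamma-1)$; from identity~(5), $[4][6] = \gamma$; and the easy consequence $[5] - 1 = 1/(\gamma - 1)$. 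These six relations are essentially all that are required.

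With this toolkit the verification of each of the seventeen nonzero $6j$-symbols is a short calculation. For instance, $\qshsymbol{1}{1}{1}{1}{1}{1} = \qsymbol{2}{2}{2}{2}{2}{2} = ([5]-1)/([2][3][4])$ becomes $(1/(\gamma-1))/([3]\cdot[2][4]) = (1/(\gamma-1))/(\gamma \cdot \gamma^2/(\gamma - 1)) = 1/\gamma^3$; similarly $\qshsymbol{2}{2}{2}{0}{2}{2}$ comes from $-1/\sqrt{[5]} = -\sqrt{(\gamma-1)/\gamma}$, and $\qshsymbol{1}{2}{2}{1}{2}{2} = -1/([4][5][6]) = -1/([3][5]) = -(\gamma-1)/\gamma^2$. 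Each remaining entry reduces analogously by substituting the relations above, after grouping quantum integers so that $[4]$ and $[6]$ appear only in the combinations $[4]^2$, $[2][6]$ or $[4][6]$ whose values are rational in $\gamma$.

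The only real obstacle is the handling of square roots. The identity $[4]^2 = \gamma^2$ determines $[4]$ only up to a sign, and analogous ambiguities occur for $[2]$, $[6]$, which are not individually rational in $\gamma$. To avoid spurious sign issues I would, in the formulas containing square roots (for example $\qsymbol{2}{2}{2}{2}{4}{4}$ and $\qsymbol{2}{2}{2}{4}{4}{4}$), first pull the $1/[4]$ factor inside the radical as $1/[4]^2$, and then apply the rational identities to the radicand before taking the square root. This is the step that needs to be performed with care, but it is otherwise routine; once every radicand is rewritten as a rational function of $\gamma$, comparison with the right-hand sides in the statement is immediate.
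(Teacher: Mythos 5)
Your overall strategy coincides with the paper's: push the table of Theorem~\ref{Theorem:Symbols} through the substitution rules $w_i' = w_{2i}$ and $\qshsymbol{i}{j}{k}{l}{m}{n} = \qsymbol{2i}{2j}{2k}{2l}{2m}{2n}$, then rewrite every quantum integer in terms of $\gamma = [3]$ using Theorem~\ref{Theorem:Identities}. The weights and all the entries that are rational in $\gamma$ go through exactly as you describe. (One small slip in your illustrative examples: there is no entry $\qshsymbol{2}{2}{2}{0}{2}{2}$ in the statement; the relevant symbol is $\qshsymbol{0}{0}{0}{2}{2}{2}$, which comes from $+1/\sqrt{[5]}$, with a plus sign.)

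The one point where you genuinely diverge from the paper is the treatment of $[4]$, and there your fix does not close the gap. Identity (6) gives only $[4]^2 = [3]^2$, i.e.\ $[4] = \pm\gamma$, and for some admissible choices of $q$ with $[3] = \gamma$ one really does get $[4] = -\gamma$. Pulling the prefactor $1/[4]$ under the radical as $1/[4]^2$ in $\qsymbol{2}{2}{2}{2}{4}{4}$ and $\qsymbol{2}{2}{2}{4}{4}{4}$ uses the identity $\frac{1}{[4]}\sqrt{X} = \sqrt{X/[4]^2}$, which fails by exactly the sign you are trying to control whenever $[4] < 0$; so the manoeuvre silently assumes $[4] > 0$ rather than resolving the ambiguity. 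The paper needs a non-computational input at this point: it invokes the fact (Matveev, remarks 8.1.17 and 8.1.18) that the correctness and the value of the invariant do not depend on the choice of sign made when extracting square roots, and only then sets $[4] = \gamma$ ``for definiteness''. Without some such justification your argument pins down the entries $\qshsymbol{1}{1}{1}{1}{2}{2}$ and $\qshsymbol{1}{1}{1}{2}{2}{2}$ only up to sign; you should either cite that independence result or restrict to a choice of $q$ for which $[4] = \gamma$ and explain why that suffices.
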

\begin{proof}
    Let the value of the root $q$ be chosen so that $[3] = \gamma$. Let us calculate the values of the required colour weights and $6j$-symbols, using the theorems \ref{Theorem:Symbols} and \ref{Theorem:Identities}, and the fact that
    \begin{center}
        $[3] = \gamma$ and $[5] = \dfrac{\gamma}{\gamma - 1}$.
    \end{center}

    First calculate the colour weights $w_i'$, $i\in\{0, 1, 2\}$.

    $w_0' = w_0 = 1$, $w_1' = w_2 = [3] = \gamma$ and $w_2' = w_4 = [5] = \dfrac{\gamma}{\gamma - 1}$.

    Next values of $6j$-symbols.

    $\qshsymbol{0}{0}{0}{0}{0}{0} = \qsymbol{0}{0}{0}{0}{0}{0} = 1$.

    $\qshsymbol{0}{0}{0}{1}{1}{1} = \qsymbol{0}{0}{0}{2}{2}{2} = -\dfrac{1}{\sqrt{[3]}} = -\dfrac{1}{\sqrt{\gamma}}$.

    $\qshsymbol{0}{0}{0}{2}{2}{2} = \qsymbol{0}{0}{0}{4}{4}{4} = \dfrac{1}{\sqrt{[5]}} = \dfrac{\sqrt{\gamma - 1}}{\sqrt{\gamma}}$.

    $\qshsymbol{0}{1}{1}{0}{1}{1} = \qsymbol{0}{2}{2}{0}{2}{2} = \dfrac{1}{[3]} = \dfrac{1}{\sqrt{\gamma}}$.

    $\qshsymbol{0}{1}{1}{1}{1}{1} = \qsymbol{0}{2}{2}{2}{2}{2} = \dfrac{1}{[3]} = \dfrac{1}{\sqrt{\gamma}}$.

    $\qshsymbol{0}{1}{1}{1}{2}{2} = \qsymbol{0}{2}{2}{2}{4}{4} = -\dfrac{1}{\sqrt{[3]\cdot [5]}} = -\dfrac{\sqrt{\gamma - 1}}{\gamma}$.

    $\qshsymbol{0}{1}{1}{2}{1}{1} = \qsymbol{0}{2}{2}{4}{2}{2} = \dfrac{1}{[3]} = \dfrac{1}{\sqrt{\gamma}}$.

    $\qshsymbol{0}{1}{1}{2}{2}{2} = \qsymbol{0}{2}{2}{4}{4}{4} = -\dfrac{1}{\sqrt{[3]\cdot [5]}} = -\dfrac{\sqrt{\gamma - 1}}{\gamma}$ .

    $\qshsymbol{0}{2}{2}{0}{2}{2} = \qsymbol{0}{4}{4}{0}{4}{4} = \dfrac{1}{[5]} = \dfrac{\gamma - 1}{\gamma}$.

    $\qshsymbol{0}{2}{2}{1}{2}{2} = \qsymbol{0}{4}{4}{2}{4}{4} = \dfrac{1}{[5]} = \dfrac{\gamma - 1}{\gamma}$.

    $\qshsymbol{1}{1}{1}{1}{1}{1} = \qsymbol{2}{2}{2}{2}{2}{2} = \dfrac{[5] - 1}{[2]\cdot [3]\cdot [4]}$. Use the second statement of the theorem \ref{Theorem:Identities} and replace in the expression $[2]\cdot [4]$ with $[3]\cdot [5]$. As a result we get the value $\dfrac{[5] - 1}{[3]^2\cdot [5]} = \dfrac{1}{\gamma^3}$.

    $\qshsymbol{1}{1}{1}{1}{1}{2} = \qsymbol{2}{2}{2}{2}{2}{4} = -\dfrac{[2]}{[3]\cdot [4]}$. Use the third statement of the theorem \ref{Theorem:Identities} and replace $\dfrac{[2]}{[4]}$ with $\dfrac{[5]}{[3]}$. We get the value $-\dfrac{[5]}{[3]^2} = -\dfrac{1}{\gamma\cdot (\gamma - 1)}$.

    $\qshsymbol{1}{1}{1}{1}{2}{2} = \qsymbol{2}{2}{2}{2}{4}{4} = -\dfrac{1}{ [4]}\cdot\sqrt{\dfrac{[2]\cdot [6]}{[3]\cdot [5]}}$. Use the fourth relation from the theorem \ref{Theorem:Identities} and replace $[2]\cdot [6]$ with $[5]$. We get the expression $-\dfrac{1}{[4]\cdot\sqrt{[3]}}$. From the sixth relation of the theorem \ref{Theorem:Identities} it follows that $[4] = \pm\gamma$, but for different choices of $q$ the value of $[4]$ can coincide with both $\gamma$ and $-\gamma$. As noted in \cite[remarks 8.1.17 and 8.1.18]{Matveev}, the correctness and the value of the invariant does not depend on the choice of sign when extracting the square root. Therefore, for definiteness, we can choose $[4] = \gamma$, and then the value of the  $6j$-symbol is equal to $-\dfrac{1}{\gamma\cdot\sqrt{\gamma}}$.

    $\qshsymbol{1}{1}{1}{2}{2}{2} = \qsymbol{2}{2}{2}{4}{4}{4} = \dfrac{[3]}{[4]\cdot\sqrt{[2]\cdot [3]\cdot [5]\cdot [6]}}$. Use the fourth relation from the theorem \ref{Theorem:Identities} and replace $[2]\cdot [6]$ with $[5]$ and, similar to the previous case, choose for definiteness $[4] = \gamma$. We get the value $\dfrac{\gamma - 1}{\gamma\cdot\sqrt{\gamma}}$.

    $\qshsymbol{1}{1}{2}{1}{1}{2} = \qsymbol{2}{2}{4}{2}{2}{4} = \dfrac{[2]}{[3]\cdot [4]\cdot [5]}$. Use the third statement of the theorem \ref{Theorem:Identities} and replace $\dfrac{[2]}{[4]}$ with $\dfrac{[5]}{[3]}$. We get the value $\dfrac{1}{[3]^2} = \dfrac{1}{\gamma^2}$.

    $\qshsymbol{1}{1}{2}{1}{2}{2} = \qsymbol{2}{2}{4}{2}{4}{4} = \dfrac{[2]}{[4]\cdot [5]}$. Use the third statement of the theorem \ref{Theorem:Identities} and replace $\dfrac{[2]}{[4]}$ with $\dfrac{[5]}{[3]}$. We get the value $\dfrac{1}{[3]} = \dfrac{1}{\gamma}$.

    $\qshsymbol{1}{2}{2}{1}{2}{2} = \qsymbol{2}{4}{4}{2}{4}{4} = -\dfrac{1}{[4]\cdot [5]\cdot [6]}$. Use the fifth statement of the theorem \ref{Theorem:Identities} and replace $[4]\cdot [6]$ with $[3]$. We get the value $-\dfrac{1}{[3]\cdot [5]} = -\dfrac{\gamma - 1}{\gamma}$.
\end{proof}

\section{Polynomials for homologically trivial parts of Turaev -- Viro invariants}

As already noted, the homologically trivial part of the Turaev -- Viro invariant of odd order $r$ is a Turaev -- Viro type invariant of order $\frac{r + 1}{2}$. For $r = 5$ the corresponding invariant of order 3 is the $\varepsilon$-invariant, and for $r = 7$ it is the $\gamma$-invariant described in the previous paragraph. Both invariants are expressed in terms of the roots of two special polynomials. For the $\varepsilon$-invariant this polynomial is equal to $x^2 - x - 1$ and for the $\gamma$-invariant it is equal to $x^3 - 2x^2 - x + 1$.

It is natural to assume that any Turaev -- Viro type invariant $TH_{\frac{r + 1}{2}}$ can be expressed in terms of the roots of a suitable polynomial. This is consistent with the fact that the values of the Turaev -- Viro invariants are algebraic integers (\cite{AlgebraicNumbers}). Looking at the formulas defining the Turaev -- Viro invariants of order $r$, it is difficult to guess which polynomials they should be. The procedure for choosing them is based on the observation proved in Theorem \ref{Theorem:Three}. To do this, we need to compute the values of $[3]_r$ for all possible $q \neq\pm 1$, which are the roots of unity of degree $2r$. Then we need to construct a polynomial whose roots are all the different computed values. The table \ref{Table:Polynomials} lists the first few polynomials constructed using this rule. The first column of the table contains the order of the Turaev -- Viro type invariant, which coincides with the homologically trivial part of the Turaev -- Viro invariant $TV_{r, 0}$ of odd order $5\leqslant r\leqslant 21$. The second column gives the corresponding polynomial.

\setcounter{table}{0}

\begin{table}[h!]
    \begin{tabular}{|c|l|} \hline
    Order & Polynomial \\ \hline
    $3$ & $x^2 - x - 1$ \\
    $4$ & $x^3 - 2x^2 - x + 1$ \\
    $5$ & $x^4 - 3x^3 + 3 x$ \\
    $6$ & $x^5 - 4x^4 + 2x^3 + 5x^2 - 2x - 1$ \\
    $7$ & $x^6 - 5x^5 + 5x^4 + 6x^3 - 7x^2 - 2x + 1$ \\
    $8$ & $x^7 - 6x^6 + 9x^5 + 5x^4 - 15x^3 + 5 x$ \\
    $9$ & $x^8 - 7x^7 + 14x^6 + x^5 - 25x^4 + 9x^3 + 12x^2 - 3x - 1$ \\
    $10$ & $x^9 - 8x^8 + 20x^7 - 7x^6 - 35 x^5 + 29 x^4 + 18 x^3 - 15 x^2 - 3x + 1$ \\
    $11$ & $x^{10} - 9x^9 + 27x^8 - 20 x^7 - 42 x^6 + 63 x^5 + 14 x^4 - 42 x^3 + 7x$ \\ \hline
    \end{tabular}
    \caption{\label{Table:Polynomials}Polynomials for invariants $TH_{r}$.}
\end{table}

We do not formulate or prove any explicit statements about the polynomials in the table \ref{Table:Polynomials}. The procedure described above should be regarded as an empirical rule. It can be used to construct required polynomials.

\bigskip

\end{document}